\renewcommand{\pod}[1]{\allowbreak\mathchoice
  {\if@display \mkern 18mu\else \mkern 8mu\fi (#1)}
  {\if@display \mkern 18mu\else \mkern 8mu\fi (#1)}
  {\mkern4mu(#1)}
  {\mkern4mu(#1)}
}
    \let\Cref\crtCref
    \let\cref\crtcref
\theoremstyle{plain}
\newtheorem{lem}{Lemma}
\newtheorem*{thm*}{Theorem}
\newtheorem*{cor*}{Corollary}
\newtheorem*{prop*}{Proposition}
\theoremstyle{definition}
\newtheorem*{exa*}{Example}
\Crefname{thm}{Theorem}{theorems}
\Crefname{lem}{Lemma}{lemmas}
\newcommand{\bs}{\boldsymbol}
\def \a{\alpha} \def \b{\beta}  \def \e{\varepsilon} \def \g{\gamma}  \def \l{\lambda}  \def \t{\theta} 
\def\widebreve{\mathpalette\wide@breve}
\def\wide@breve#1#2{\sbox\z@{$#1#2$}%
     \mathop{\vbox{\m@th\ialign{##\crcr
\kern0.08em\brevefill#1{0.8\wd\z@}\crcr\noalign{\nointerlineskip}%
                    $\hss#1#2\hss$\crcr}}}\limits}
\def\brevefill#1#2{$\m@th\sbox\tw@{$#1($}%
  \hss\resizebox{#2}{\wd\tw@}{\rotatebox[origin=c]{90}{\upshape(}}\hss$}
\numberwithin{equation}{section}
\renewcommand{\labelenumi}{\setlength{\labelwidth}{\leftmargin}
   \addtolength{\labelwidth}{-\labelsep}
   \hbox to \labelwidth{\theenumi.\hfill}}
\begin{document}
\title{$L^p$ maximal estimates for quadratic Weyl sums}
\author{Roger Baker}
\address{Department of Mathematics\newline
\indent Brigham Young University\newline
\indent Provo, UT 84602, U.S.A}
\email{baker@math.byu.edu}

 \begin{abstract}
Let $p \ge 1$. We give upper and lower bounds for
 \[M_p(N): = \Bigg\|\sup_{0 \le t \le 1} \Bigg|
 \sum_{n=1}^N e(nx + n^2t)\Bigg|\, \Bigg\|_{L^p[0,1]}^p\]
that are of the same order of magnitude.
 \end{abstract}

\keywords{Weyl sum, $L^p$ maximal estimate.}

\subjclass[2020]{Primary 11L15; Secondary 35Q55}
\maketitle

 \section{Introduction}\label{sec:intro}

Let $e(t) = e^{2\pi it}$. The Schr\"odinger equation for $u(x,t)$,
 \[\frac i{2\pi} \ \frac{\partial u}{\partial t} -
 \frac 1{4\pi^2} \ \frac{\partial^2u}{\partial x^2} = 0\]
with periodic boundary condition
 \[u(x,0) = \sum_k a_k e(kx)\]
has the solution
 \[u(x,t) = \sum_k a_k e(kx + k^2t).\]
The associated maximal function
 \[\sup_t |u(x,t)|\]
was studied by Carleson \cite{carl} for solutions of the free Schr\"odinger equation on the real line; see Pierce \cite{pie} and Moyau and Vega \cite{moyveg} for further results and references. In the periodic case, Moyau and Vega showed that for every $\e > 0$,
 \newpage
 
 \begin{align}
\Bigg\|\sup_{0 \le t \le 1}  \Bigg| \sum_{n=1}^N &a_n e(nx + n^2t)\Bigg|\ \Bigg\|_{L^p[0,1]}\label{eq1.1}\\
&\qquad \le C(\e) N^{1/3 + \e} \|a\|_2 \quad (1 \le p \le 6),\notag
 \end{align}
where $\|a\|_2 = \left(\sum\limits_{n=1}^N |a_n|^2\right)^{1/2}$. When $p=6$, this is sharp up to the loss of $N^\e$ (let $a_n = 1$ and restrict to $[0, 10^{-6} N^{-1}]$). However, in a recent paper Barron \cite{bar} improved \eqref{eq1.1} for $1 \le p\le 4$ in the special case where $a_n=1$. Let
 \[M_p(N) = \int_0^1 \max_{0 \le t \le 1} \Bigg|
 \sum_{n=1}^N e(nx + n^2t)\Bigg|^p dx.\]
Barron showed that $M_4(N) \ll_\e N^{3+\e}$, improving \eqref{eq1.1} in this case.

In the present paper we sharpen Barron's result and determine the order of magnitude of $M_p(N)$ for $p \ge 1$.

 \begin{thm*}
We have
 \begin{equation}\label{eq1.2}
N^{a(p)}(\log N)^{b(p)} \ll M_p(N) \ll N^{a(p)}(\log N)^{b(p)}, 
 \end{equation}
where
 \begin{align*}
 a(p) &= \begin{cases}
 \frac{3p}4 & (1 \le p \le 4)\\[1mm]
 p - 1 & (p > 4)\end{cases}\\
\intertext{and}
b(p) &= \begin{cases}
1 & (p = 4)\\[1mm]
0 & (p \ge 1, p \ne 4).\end{cases}
 \end{align*}
The implied constants depend at most on $p$.
 \end{thm*}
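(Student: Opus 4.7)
For each denominator $q \in [1, \sqrt N]$, each reduced fraction $b/q$, and each residue $a$ with $\gcd(a, q) = 1$, the classical major-arc factorization via $n = r + mq$ writes $S(x, a/q)$ as a Gauss sum in $r$ of magnitude $\sqrt q$ times a Dirichlet-type sum $\sum_m e(qm(x-b/q))$ of magnitude $\gg N/q$ on the interval $I_{b,q} := \{|x-b/q| \le c/N\}$ for a small absolute constant $c$. Hence $|S(x, a/q)| \gg N/\sqrt q$ uniformly on $I_{b,q}$, and after restricting to pairwise-disjoint sub-intervals across different $q$,
\[
M_p(N) \gg \sum_{q \le \sqrt N} \phi(q) \cdot N^{-1} \cdot (N/\sqrt q)^p \asymp N^{p-1} \sum_{q \le \sqrt N} q^{1-p/2}.
\]
The three cases $p<4$, $p=4$, $p>4$ of this arithmetic sum (dominated respectively by $q \asymp \sqrt N$, the full logarithmic range, and $q = O(1)$) directly produce $N^{a(p)} (\log N)^{b(p)}$.

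\textbf{Upper bound.} Set $Q = \sqrt N$ (the natural scale for quadratic Weyl sums). By Dirichlet's theorem every $t \in [0,1]$ lies in some Farey arc $M_{a,q} = \{a/q + \beta : |\beta| \le 1/(qQ)\}$ with $1 \le q \le Q$, $\gcd(a,q) = 1$. The key step is the pointwise bound
\[
\sup_{t \in \bigcup_a M_{a,q}} |S(x,t)| \ll f_q(x) := \min\!\left(\frac{N}{\sqrt q}, \frac{\sqrt q}{\|qx\|}\right),
\]
with $\|\cdot\|$ the distance to the nearest integer, which yields $\sup_t |S(x,t)| \le \max_{q \le \sqrt N} f_q(x)$. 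The $L^p$ norm of the right-hand side is then controlled via its distribution function: the level set $\{\max_q f_q > \lambda\}$ is contained in $\bigcup_{q < N^2/\lambda^2} \{x : \|qx\| < \sqrt q/\lambda\}$, so a union bound gives
\[
\big|\{x \in [0,1] : \max_{q \le \sqrt N} f_q(x) > \lambda\}\big| \ll \min(1, N^3/\lambda^4).
\]
Integrating $p\lambda^{p-1}$ against this over $\lambda \in [0, N]$, with the transition at $\lambda \asymp N^{3/4}$, yields $N^{a(p)} (\log N)^{b(p)}$ in each of the three cases; the logarithm appears only at $p = 4$ through the integral $\int d\lambda/\lambda$.

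\textbf{Main obstacle.} The crux of the argument is the pointwise bound $\sup_{t \in \cup_a M_{a,q}} |S(x,t)| \ll f_q(x)$. After the $n = r + mq$ splitting, the outer sum over $r$ contributes a twisted Gauss sum of magnitude $\le \sqrt q$, but the inner sum
\[
T_r(\beta) = \sum_{m \le (N-r)/q} e\big(m(qx + 2rq\beta) + m^2 q^2 \beta\big)
\]
is a genuine quadratic Weyl sum whose quadratic phase $q^2\beta m^2$ is not small on the full Farey arc (its extremum may exceed $1$). Controlling $\sup_\beta |T_r(\beta)|$ by the linear Dirichlet envelope $\min(N/q, 1/\|qx\|)$ requires combining a stationary-phase treatment of $T_r$ (in the regime where the stationary point lies in $[0, N/q]$) with a Sobolev- or Bernstein-type inequality in $\beta$ that exploits $\partial_\beta T_r = O(N^2/q)$. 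The endpoint range $q \asymp \sqrt N$, where the Farey arcs are shortest and $O(1)$ incomplete-range corrections in $r$ become comparable to the main term, is the most delicate part; handling it likely requires a separate $L^2(d\beta)$ moment calculation combined with Weyl differencing in $m$.
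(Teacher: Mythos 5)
Your lower bound is sound in outline and is essentially the paper's own argument (Section \ref{sec:prooflowbound12}), with one simplification and one omission. The simplification: by evaluating at $t=a/q$ exactly you avoid the Fresnel-integral computation (Lemma \ref{lem:letA,Xposnums}), which the paper needs because it perturbs $t$ off $1/q$. The omission: the complete quadratic Gauss sum $\sum_{r=1}^{q}e((ar^{2}+br)/q)$ does \emph{not} always have modulus $\sqrt q$ --- it vanishes, for instance, when $q\equiv 2\pmod 4$ --- so you must restrict to odd $q$ (as the paper does via the explicit evaluation in Lemma \ref{lem:(i)(ii)}(ii)); this costs nothing in the final sum over $q$, since $\sum_{q\le\sqrt N,\,q\text{ odd}}\phi(q)q^{-p/2}$ has the same order.

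The upper bound, however, contains a genuine gap, and it is precisely the step you flag: the pointwise claim $\sup_{t\in\bigcup_{a}M_{a,q}}|S(x,t)|\ll\min\bigl(N/\sqrt q,\ \sqrt q/\|qx\|\bigr)$ is not only unproven but false as stated. Take an odd $q'$ slightly larger than $\sqrt N$, put $t=1/q'$ and $x=1/q'+\beta_{1}$ with $|\beta_{1}|\le(100N)^{-1}$. Then $t\in M_{0,1}$, yet the major-arc evaluation (Lemmas \ref{lem:letx=fraca1q+b1} and \ref{lem:(i)(ii)}(ii)) gives $|S(x,t)|\asymp N/\sqrt{q'}\asymp N^{3/4}$, while your envelope for $q=1$ is $f_{1}(x)=\min(N,1/\|x\|)\asymp q'\asymp N^{1/2}$. (The maximum $\max_{q\le\sqrt N}f_{q}(x)$ does happen to reach $N^{3/4}$ at this $x$, via $q\asymp\sqrt N$, so the final distributional bound --- which is correctly computed --- is not contradicted; but your route to it, arc by arc in $t$, collapses.) The defect is structural: the size of $S(x,t)$ is governed by a \emph{simultaneous} rational approximation to the pair $(x,t)$ with a common denominator, not by a Farey approximation of $t$ alone of order $\sqrt N$ combined with $\|qx\|$ for that same $q$. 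The missing ingredient is Lemma \ref{lem:suppose|S(x,t)|} (Baker's theorem from \cite{rcb}): if $|S(x,t)|\ge P\ge N^{1/2+\e}$ then there exist $q\ll(NP^{-1})^{2}N^{\e}$ and integers $a_{1},a_{2}$ with $|qx-a_{1}|\ll(NP^{-1})^{2}N^{\e-1}$ and $|qt-a_{2}|\ll(NP^{-1})^{2}N^{\e-2}$. Taking $P=N^{3/4}$ places $(x,t(x))$ on a true two-dimensional major arc with $q\ll N^{1/2+\e}$, after which $|S|\ll q^{-1/2}\min(N,N^{1/2}|\beta_{1}|^{-1/2})$ and the summation over $q,a_{1}$ and integration over $\beta_{1}$ reproduce exactly the three-case computation you carried out distributionally. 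Without an input of this simultaneous-approximation type, the stationary-phase/Sobolev programme you sketch for $\sup_{\beta}|T_{r}(\beta)|$ cannot succeed, because the supremum over a single Farey arc in $t$ genuinely exceeds the claimed envelope.
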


After collecting some lemmas in Section \ref{sec:lemsproofthm}, the upper bound in \eqref{eq1.2} is proved in Section \ref{sec:proofupbound1.2}. For $p > 4$, the lower bound is essentially trivial (as above, restrict integration to $[0, 10^{-6}N^{-1}]$). The lower bound for $1 \le p \le 4$ is proved in Section \ref{sec:prooflowbound12}.

We indicate briefly how these proofs proceed. Let
 \[S(x,t) = \sum_{n=1}^N e(nx + n^2t).\]
We define $t(x)$ to be the least number in $[0,1]$ satisfying
 \[|S(x,t(x))| = \max_{0 \le t \le 1} \Bigg|
 \sum_{n=1}^N e(nx + n^2)\Bigg|.\]
For the upper bound in \eqref{eq1.2}, we may restrict integration to the set where $|S(x,t(x))|$ is large. For $x$ in this set, we approximate $|S(x,t(x))|$ by
 \begin{equation}\label{eq1.3}
q^{-1} S(q,\bs a) I(\bs \b),
 \end{equation}
where $\frac{\bs a}q = \left(\frac{a_1}q, \frac{a_2}q\right)$ is close to $(x,t(x))$, and $\bs\b = (\b_1,\b_2)$ is given by
 \[\bs\b = \left(x - \frac{a_1}q, t(x) - \frac{a_2}q\right).\]
Here
 \[S(q,\bs a) = \sum_{y=1}^q e \left(
 \frac{ya_1 + y^2a_2}q\right) \, , \, I(\bs \b) =
 \int_0^N e(\b_1\g + \b_2\g^2)d\g.\]
We can now use standard upper bounds for $S(q,\bs a)$ and $I(\bs \b)$ to complete the proof. The existence of $\frac{\bs a}q$ with small enough $\b_1$ and $\b_2$, which `jump-starts' the argument, depends on the work of Baker \cite{rcb}.

For the lower bound in \eqref{eq1.2}, we restrict integration to
 \begin{equation}\label{eq1.4}
E = \bigcup_{\substack{3 \le q \le \frac 16\, N^{1/2}\\
q \text{ odd}}} \ \bigcup_{\substack{ 1 \le a_1 \le q\\
(a_1,q)=1}} \left[\frac{a_1}q + \frac{15}N, \frac{a_1}q + \frac{16}N\right].
 \end{equation}
We define a suitable $v(x)$ $(x\in E)$ for which $S(x,v(x))$ is sufficiently closely approximated by \eqref{eq1.3} with $a_2 = 1$, and now
 \begin{equation}\label{eq1.5}
\bs \b = \left(x - \frac{a_1}q \, , \, v(x) - \frac 1q\right).
 \end{equation}
We find that
 \begin{equation}\label{eq1.6}
|I(\bs\b)| \gg N.
 \end{equation}
We also have
 \begin{equation}\label{eq1.7}
|S(q, a_1,1)| \gg q^{1/2} 
 \end{equation}
because $q$ is odd. Thus the quantity in \eqref{eq1.3} is $\gg Nq^{-1/2}$, leading quickly to the lower bound in \eqref{eq1.2}. 
 \bigskip

 \section{Lemmas for the proof of the theorem.}\label{sec:lemsproofthm}

 \begin{lem}\label{lem:suppose|S(x,t)|}
Suppose that
 \[|S(x,t)| \ge P \ge N^{1/2+\e}.\]
Then there exist a natural number $q$ and integers $a_1$, $a_2$ such that
 \begin{gather}
q \ll (NP^{-1})^2 N^\e \ , \ (q, a_1, a_2) = 1,\label{eq2.1}\\[2mm]
|qx - a_1| \ll (NP^{-1})^2 N^{\e-1} \ , \ |qt - a_2| \ll (NP^{-1})^2 N^{\e-2}.\label{eq2.2}
 \end{gather} 
 \end{lem}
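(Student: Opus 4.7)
The plan is to combine Weyl's squaring identity with a standard Diophantine inversion lemma to produce the rational approximation of $t$, and then to recover the companion approximation of $x$ with the same denominator by pigeonholing $S(x,t)$ over residue classes modulo $q$.

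First, I would apply Weyl squaring. Expanding $|S(x,t)|^2$ and setting $n_1 = n_2 + h$,
\[
|S(x,t)|^2 = \sum_{|h|<N} e(hx + h^2 t)\sum_{n \in I(h)} e(2ht\,n),
\]
where $I(h) \subset [1,N]$ has length $N - |h|$. Bounding the inner geometric sum by $\min(N, \|2ht\|^{-1})$ (with $\|\cdot\|$ the distance to the nearest integer) yields
\[
P^2 \le \sum_{|h|<N} \min(N, \|2ht\|^{-1}).
\]
The converse direction of a standard estimate for such sums (e.g.\ Vaughan, \emph{The Hardy--Littlewood Method}, Lemma 2.2) then produces coprime integers $q_0, b$ with $q_0 \ll (NP^{-1})^2 N^\e$ and $|2q_0 t - b| \ll (NP^{-1})^2 N^{\e-2}$. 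Absorbing the factor of $2$ (by doubling $q_0$ if $b$ is odd, and renaming) yields $q, a_2$ with $(q, a_2) = 1$ and the $t$-bounds required by \eqref{eq2.1}--\eqref{eq2.2}.

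Second, I would recover the $x$-approximation. Writing $t = a_2/q + \beta_2$ and splitting $n = qm + r$ over residues modulo $q$,
\[
S(x,t) = \sum_{r=1}^q e\!\left(\frac{r^2 a_2}{q} + rx + r^2\beta_2\right) T_r(x),
\]
where $T_r(x) = \sum_m e\bigl((qx + 2qr\beta_2)m + q^2\beta_2 m^2\bigr)$ is a Weyl sum of length $\asymp N/q$ in $m$. A pigeonhole selection over $r$ produces some $r_0$ with $|T_{r_0}(x)| \gg P/q$, and Dirichlet's theorem applied to the linear frequency $qx + 2qr_0\beta_2$ (treating the quadratic phase $q^2\beta_2 m^2$ separately, via a further squaring step if $q$ is well below its extremal size) gives an integer $a_1$ with $|qx - a_1| \ll (NP^{-1})^2 N^{\e-1}$. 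Dividing out any common factor of $(q, a_1, a_2)$ enforces the coprimality while only sharpening the bounds.

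The main obstacle is the second step: a naive independent application of Dirichlet's theorem to $x$ would produce an unrelated denominator, and controlling the quadratic remainder $q^2\beta_2 m^2$ inside $T_r(x)$ requires the precise size of $\beta_2$ obtained from the $t$-approximation. In view of the remark in the introduction that the existence of $\bs a/q$ ``jump-starting'' the argument depends on Baker \cite{rcb}, one may instead simply cite \cite{rcb} as a black box applied to $S(x,t)$, verifying that the hypothesis $P \ge N^{1/2+\e}$ places $(x,t)$ within its scope.
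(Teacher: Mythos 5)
The paper's entire proof of this lemma is the single sentence ``This is a special case of the main result of \cite{rcb},'' which is exactly the fallback you reach in your final paragraph; on that route your proposal coincides with the paper. Your attempted direct argument, however, should not be mistaken for an alternative proof. The Weyl-squaring step only yields the rational approximation to $t$, and the second step has a genuine gap that you only partly acknowledge: after splitting into residue classes, the sum $T_{r_0}(x)$ carries the quadratic phase $q^2\beta_2 m^2$, whose total size over $m \ll N/q$ is $N^2|\beta_2| \ll (NP^{-1})^2N^{\e}/q$, and this is unbounded when $q$ is small (e.g.\ $q=1$ with $P$ near $N^{1/2+\e}$). Dirichlet's theorem applied to the linear frequency alone is then not justified, and a further squaring step differences away the linear coefficient $qx$ entirely, so no information about $x$ survives that route. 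Producing a \emph{single} denominator $q$ that simultaneously approximates both coefficients, with the stated quality, is precisely the nontrivial content of the main theorem of \cite{rcb} (for degree $k=2$, where the exponent $k(k-1)=2$ gives $(NP^{-1})^2N^\e$, nontrivial exactly when $P \ge N^{1/2+\e}$); the citation is doing all the work, and the honest proof here is the one-line appeal to it.
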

 
 \begin{proof}
This is a special case of the main result of \cite{rcb}.
 \end{proof}
 
 \begin{lem}\label{lem:letx=fraca1q+b1}
Let $x = \frac{a_1}q + \b_1$ and $t = \frac{a_2}q + \b_2$. Then
 \[S(x,t) = q^{-1}S(q,\bs a) I(\bs \b) + \Delta\]
where
 \[\Delta \ll q(1 + |\b_1| N + |\b_2|N^2).\]
 \end{lem}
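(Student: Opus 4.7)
The plan is to reduce the exponential sum to a sum over arithmetic progressions modulo $q$ and then approximate each progression sum by a scaled copy of the integral $I(\bs\b)$. Writing $n\equiv r\pmod q$ with $1\le r\le q$, the key observation is that
\[
e\!\left(\tfrac{na_1+n^2a_2}{q}\right)=e\!\left(\tfrac{ra_1+r^2a_2}{q}\right),
\]
since replacing $n$ by $n+q$ changes $na_1+n^2a_2$ modulo $q$ by a multiple of $q$ (using $(n+q)^2\equiv n^2\pmod q$). Hence
\[
S(x,t)=\sum_{r=1}^{q}e\!\left(\tfrac{ra_1+r^2a_2}{q}\right)\,T_r, \qquad T_r:=\sum_{\substack{1\le n\le N\\ n\equiv r\pmod q}}e(\b_1 n+\b_2 n^2).
\]

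Next I would compare each $T_r$ with $q^{-1}I(\bs\b)$. Set $g(\g)=e(\b_1\g+\b_2\g^2)$, so that $|g|=1$ and $|g'(\g)|\le 2\pi(|\b_1|+2|\b_2|\g)$. A standard partial-summation (or Euler–Maclaurin) comparison of a sum over an arithmetic progression of common difference $q$ with an integral yields
\[
T_r=\tfrac1q\int_{0}^{N}g(\g)\,d\g+O\!\left(1+\tfrac1q\int_{0}^{N}|g'(\g)|\,d\g\right)=\tfrac1q I(\bs\b)+O\!\left(1+|\b_1|N+|\b_2|N^2\right),
\]
where the $O(1)$ absorbs the endpoint contributions $|g(0)|,|g(N)|$ and the discrepancy between $[1,N]$ and $[0,N]$. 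The implied constant is absolute. Multiplying by $e(\tfrac{ra_1+r^2a_2}{q})$ (which has modulus one) and summing over $r=1,\ldots,q$ produces
\[
S(x,t)=q^{-1}S(q,\bs a)\,I(\bs\b)+\Delta,\qquad \Delta\ll q\bigl(1+|\b_1|N+|\b_2|N^2\bigr),
\]
as desired.

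The only non-routine point is justifying the progression-sum-versus-integral comparison with the stated error term; this is standard but the book-keeping must track the factor of $1/q$ correctly, since the density of the progression in $[0,N]$ is $1/q$, while the $q$ residue classes contribute $q$ equal error terms that combine to give the final factor of $q$. Every other step---the factorisation of the arithmetic phase, the recognition of $S(q,\bs a)$, and the identification of the integral as $I(\bs\b)$---is direct from the definitions.
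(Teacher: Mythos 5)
Your argument is correct: the decomposition into residue classes modulo $q$, followed by the standard sum-versus-integral comparison for each progression with error $O(1+|\b_1|N+|\b_2|N^2)$ per class, is exactly the proof of Vaughan, \emph{The Hardy--Littlewood Method}, Theorem 7.2, which is all the paper cites for this lemma. So you have in effect supplied the proof of the cited result rather than a different route, and the bookkeeping of the $1/q$ density against the $q$ residue classes is handled correctly.
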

 
 \begin{proof}
This is a special case of Vaughan \cite{vau}, Theorem 7.2.
 \end{proof}
 
 \begin{lem}\label{lem:(i)(ii)}
(i) For $q \ge 1$, $(q, a_1, a_2) = 1$, we have
 \[S(q, \bs a) \ll q^{1/2}.\]

(ii) Let $q$ be odd and $(q, a_1) =1$. Let $[4,q]$ denote the inverse of $4\pmod q$. Then
 \[S(q,a_1, 1) = \begin{cases}
 e\left(-\frac{[4,q]a_1^2}q\right) q^{1/2} & (q \equiv 1 \pmod 4)\\[4mm]
 e\left(-\frac{[4,q]a_1^2}q\right) iq^{1/2} & (q \equiv 3 \pmod 4).\end{cases}\]
 \end{lem}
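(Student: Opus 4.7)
My plan is to treat the two parts together by the completing-the-square trick, since (ii) is essentially the explicit outcome of the calculation that (i) bounds.

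For part (i), the first step is a reduction: I will show we may assume $(q,a_2)=1$. Indeed, setting $d=(q,a_2)$, the change of variables $y\mapsto y+q/d$ preserves the range of summation modulo $q$ but multiplies $e((ya_1+y^2a_2)/q)$ by $e(a_1/d)\cdot e(2a_2y/d)\cdot e(a_2q/d^2)$; the last two factors equal $1$ because $d\mid a_2$ and $d\mid q$. Hence $S(q,\bs a)=e(a_1/d)S(q,\bs a)$, forcing either $S(q,\bs a)=0$ (in which case the bound is trivial) or $d\mid a_1$; combined with $d\mid q$ and $d\mid a_2$ and $(q,a_1,a_2)=1$, the latter gives $d=1$.

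With $(q,a_2)=1$ in hand, the main step is to complete the square. When $q$ is odd, both $2$ and $a_2$ are invertible modulo $q$, so writing $\overline{a_2}$ for the inverse of $a_2$ and $[4,q]$ for the inverse of $4$, the identity
\[
a_2y^2+a_1y \equiv a_2\bigl(y+[2,q]\overline{a_2}a_1\bigr)^2 - [4,q]\overline{a_2}a_1^2\pmod{q}
\]
and the substitution $y'=y+[2,q]\overline{a_2}a_1$ give
\[
S(q,a_1,a_2)=e\!\left(-\frac{[4,q]\overline{a_2}a_1^2}{q}\right)\sum_{y'=1}^{q}e\!\left(\frac{a_2y'^{2}}{q}\right).
\]
The remaining sum is the classical quadratic Gauss sum, equal to $\bigl(\tfrac{a_2}{q}\bigr)\epsilon_q\sqrt{q}$ with $\epsilon_q\in\{1,i\}$ according to $q\pmod 4$. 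This gives $|S(q,\bs a)|=\sqrt{q}$, and specializing to $a_2=1$ (so $\overline{a_2}=1$ and the Jacobi symbol is $1$) yields part (ii) directly.

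The part I expect to be mildly delicate is finishing part (i) when $q$ is even, since $2$ is not invertible. The plan is to split $q=2^\alpha m$ with $m$ odd via the Chinese remainder theorem, writing $S(q,\bs a)=S(m,\cdot)\,S(2^\alpha,\cdot)$ with appropriate rewritten numerators; the odd-modulus factor is handled above, and the $2^\alpha$ factor can be evaluated (or merely bounded) using the classical quadratic Gauss sum formula modulo a power of $2$, giving $|S(2^\alpha,\cdot)|\ll 2^{\alpha/2}$. Multiplying the two contributions yields $|S(q,\bs a)|\ll q^{1/2}$. Alternatively, one may simply appeal to the standard evaluation of quadratic Gauss sums (for instance in Vaughan's book) for an immediate proof of (i).
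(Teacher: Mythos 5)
Your argument is correct but takes a genuinely different route from the paper, which proves this lemma purely by citation: part (i) is deduced from Lemmas 3--9 of Estermann and part (ii) from Estermann's Lemma 3 together with the evaluation of the quadratic Gauss sum in Chapter 2 of Davenport. You instead work directly. Your reduction to $(q,a_2)=1$ is sound: the shift $y\mapsto y+q/d$ with $d=(q,a_2)$ does multiply the summand by exactly $e(a_1/d)$ (the other two factors are integers' exponentials), so either $S(q,\bs a)=0$ or $d\mid a_1$, and then $(q,a_1,a_2)=1$ forces $d=1$. Completing the square for odd $q$ and invoking the classical evaluation $\sum_{y\bmod q}e(a_2y^2/q)=\bigl(\tfrac{a_2}{q}\bigr)\epsilon_q\sqrt q$ then yields both the exact formula in (ii) (on setting $a_2=1$) and the bound in (i) for odd $q$; this has the virtue of exhibiting (ii) as the explicit odd-modulus instance of the computation underlying (i), which the paper's pair of citations leaves opaque. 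The trade-off is that the even-modulus case of (i) remains a sketch in your write-up: the Chinese-remainder factorization with rewritten numerators and the bound $\ll 2^{\alpha/2}$ for the $2$-power factor are asserted rather than carried out. That bookkeeping is standard --- it is precisely the content of Estermann's lemmas, or of the classical estimate $\bigl|\sum_{n\bmod c}e((an^2+bn)/c)\bigr|\le\sqrt{2c}$ for $(a,c)=1$ --- but to make your proof of (i) complete you would need either to execute it or, as you note at the end, to fall back on the same kind of citation the paper uses.
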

 
 \begin{proof}
Part (i) is a consequence of Lemmas 3--9 of Estermann \cite{est}. Part (ii) follows from Lemma 3 of \cite{est} in conjunction with the evaluation of the sum $S$ in Chapter 2 of Davenport \cite{dav}.
 \end{proof}
 
 \begin{lem}\label{lem:letA,Xposnums}
Let $A$, $X$ be positive numbers. Then
 \[\int_{-X}^X e(A\g^2)d\g = \frac 1{2\sqrt A}
 + \frac \l{AX},\]
where $|\l| \le 7/12$.
 \end{lem}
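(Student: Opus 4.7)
The plan is to compare the truncated integral with the full Fresnel integral over $\mathbb{R}$, whose explicit value supplies the main term, and then control the tail contribution by integration by parts against the oscillating exponential.

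By evenness of $e(A\gamma^2)$ in $\gamma$, I would first split
\[
\int_{-X}^X e(A\gamma^2)\,d\gamma \;=\; \int_{-\infty}^{\infty} e(A\gamma^2)\,d\gamma \;-\; 2\int_X^\infty e(A\gamma^2)\,d\gamma.
\]
The infinite Fresnel integral is evaluated in closed form by substituting $u = \gamma\sqrt{2\pi A}$ and invoking the classical identity $\int_0^\infty e^{iu^2}\,du = \tfrac{\sqrt{\pi}}{2}\,e^{i\pi/4}$; this produces the main term of order $1/\sqrt{A}$ appearing in the statement.

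For the tail $\int_X^\infty e(A\gamma^2)\,d\gamma$, I substitute $u = A\gamma^2$, so that $d\gamma = du/(2\sqrt{Au})$, turning the tail into $(2\sqrt{A})^{-1}\int_{AX^2}^\infty u^{-1/2}e(u)\,du$. A single integration by parts, taking $e(u)/(2\pi i)$ as antiderivative of $e(u)$, produces a boundary term of modulus $1/(4\pi AX)$ plus a remainder $\int_{AX^2}^\infty u^{-3/2}e(u)\,du$, whose absolute value is at most $\int_{AX^2}^\infty u^{-3/2}\,du = 2/(X\sqrt{A})$ and contributes a further $1/(4\pi AX)$. Consequently the entire tail is $O\!\left(1/(AX)\right)$ with a small explicit constant.

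Combining the two steps yields the claimed approximation, and the crude bounds above already produce $|\lambda| \le 1/\pi \approx 0.318$, comfortably smaller than the generous constant $7/12 \approx 0.583$ in the lemma, so there is ample slack in the numerical constant. I do not expect a genuine obstacle here: the only care required is tracking the imaginary phase $e^{i\pi/4}$ issuing from the Fresnel evaluation so that it is correctly identified with the stated main term $1/(2\sqrt{A})$, with the remaining imaginary and error contributions packaged into $\lambda/(AX)$.
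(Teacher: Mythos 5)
Your decomposition and your tail estimate are sound: subtracting the full Fresnel integral and controlling $\int_X^\infty e(A\gamma^2)\,d\gamma$ via the substitution $u=A\gamma^2$ and one integration by parts is a legitimate alternative to the paper's argument, which instead evaluates $\int_0^Y e(x^2)\,dx$ by rotating the contour onto the ray $\arg z=\pi/4$ and bounding the circular arc and the Gaussian tail explicitly. Your constant $|\lambda|\le 1/\pi$ for the tail is even a little better than the paper's $7/12$.

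The genuine gap is in your final paragraph, where you propose to package ``the remaining imaginary contribution'' of the phase $e^{i\pi/4}$ into $\lambda/(AX)$. That cannot be done. The full Fresnel integral is
\[
\int_{-\infty}^{\infty} e(A\gamma^2)\,d\gamma=\frac{e(1/8)}{\sqrt{2A}}=\frac{1+i}{2\sqrt A},
\]
so the leftover imaginary part is $\frac{i}{2\sqrt A}$, which is of the same order as the claimed main term and is much larger than $1/(AX)$ once $X\sqrt A$ is large; no bounded $\lambda$ can absorb it. What your computation actually establishes is
\[
\int_{-X}^{X}e(A\gamma^2)\,d\gamma=\frac{1+i}{2\sqrt A}+\frac{\lambda}{AX},\qquad |\lambda|\le \tfrac1\pi .
\]
This is a defect of the printed statement rather than of your method: the paper's own proof contains the same slip, asserting $\int_{-Y}^{Y}e(x^2)\,dx=\frac{e(1/8)+e(-1/8)}{2\sqrt2}+\frac{\lambda_1}{Y}$, whereas by evenness the left-hand side equals $2\int_0^Y e(x^2)\,dx=\frac{e(1/8)}{\sqrt2}+O(1/Y)=\frac{1+i}{2}+O(1/Y)$. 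The lemma should read $\frac{1+i}{2\sqrt A}$ in place of $\frac1{2\sqrt A}$; the application in Section \ref{sec:prooflowbound12} is unaffected, since only a lower bound on $\bigl|\int_{-N/2}^{N/2}e(\beta_1\gamma^2/N)\,d\gamma\bigr|$ is used there and the corrected main term has the larger modulus $1/\sqrt{2A}$. So finish your proof by keeping the factor $1+i$ in the main term and flagging the discrepancy, rather than trying to hide an order-$1/\sqrt A$ quantity inside the error term.
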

 
 \begin{proof}
Let $Y > 0$. The integral
 \[\int_C e(z^2)dz\]
is 0, where $C$ consists of straight line segments $[0,Y]$ and $\left[Y e\left(\frac 18\right),0\right]$, together with the circular arc $C_1$ from $Y$ to $Ye(1/8)$. Hence,
 \begin{align*}
\int_0^Y e(x^2)dx + \int_{C_1} e(z^2)dz &= e\left(\frac 18\right) \int_0^Y e^{-2\pi x^2}dx\\[2mm]
&= \frac{e\left(\frac 18\right)}{2\sqrt 2} + \nu \int_Y^\infty \frac{dx}{2\pi x^2} = \frac{e\left(\frac 18\right)}{2\sqrt 2} + \frac \nu{2\pi Y}
 \end{align*}
where $|\nu| \le 1$. The integral over $C_1$ has absolute value
 \begin{align*}
&\le Y \int_0^{\pi/4} \exp(-2\pi Y^2 \sin 2\t) d\t\\[2mm]
&\le Y \int_0^\infty \exp(-8Y^2\t)d\t = \frac 1{8Y}.
 \end{align*}
Hence
 \[\int_{-Y}^Y e(x^2)dx = 
 \frac{e\left(\frac 18\right)+ e\left(-\frac 18\right)}{2\sqrt 2}
 + \frac{\l_1}Y = \frac 12 + \frac{\l_1}Y\]
where $|\l_1| \le \frac 14 + \frac 1\pi < \frac 7{12}$. The general case follows on a change of variable.
 \end{proof}
 
 \begin{lem}\label{lem:I(bsbeta<<N}
We have
 \[I(\bs \b) \ll N(1 + |\b_1|N + |\b_2|N^2)^{-1/2}.\]
 \end{lem}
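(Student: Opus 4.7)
The trivial estimate $|I(\bs\b)|\le N$ handles the range $|\b_1|N+|\b_2|N^2=O(1)$, so from now on I assume at least one of these quantities is large and split according to whether $|\b_1|N$ or $|\b_2|N^2$ dominates.

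\emph{Linear regime ($|\b_1|\ge 4|\b_2|N$).} The phase $\phi(\g)=\b_1\g+\b_2\g^2$ satisfies $|\phi'(\g)|=|\b_1+2\b_2\g|\ge|\b_1|-2|\b_2|N\ge|\b_1|/2$ throughout $[0,N]$, so a single integration by parts yields $|I(\bs\b)|\ll 1/|\b_1|+|\b_2|N/\b_1^{2}\ll 1/|\b_1|$, the second inequality again using $|\b_2|N\le|\b_1|/4$. When $|\b_1|N\ge 1$ this is $\ll\sqrt{N/|\b_1|}$, and since the regime hypothesis gives $|\b_2|N^2\le|\b_1|N/4$, one has $1+|\b_1|N+|\b_2|N^2\asymp|\b_1|N$, producing the desired bound $N/\sqrt{1+|\b_1|N+|\b_2|N^2}$.

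\emph{Quadratic regime ($|\b_1|<4|\b_2|N$).} Here $\b_2\ne 0$ and I complete the square: with $c=\b_1/(2\b_2)$,
\[
I(\bs\b)=e(-\b_2 c^2)\int_{c}^{N+c} e(\b_2 u^2)\,du.
\]
I claim $\bigl|\int_c^{N+c} e(\b_2 u^2)\,du\bigr|\ll|\b_2|^{-1/2}$ for every real $c$. Since $e(\b_2 u^2)$ is even in $u$, $\int_0^X e(\b_2 u^2)\,du=\tfrac12\int_{-X}^X e(\b_2 u^2)\,du$ for $X>0$, and \Cref{lem:letA,Xposnums} together with the trivial estimate $\bigl|\int_0^X\bigr|\le X$ gives $\bigl|\int_0^X e(\b_2 u^2)\,du\bigr|\ll|\b_2|^{-1/2}$ uniformly in $X>0$ (and also in $X<0$ by evenness). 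Splitting $\int_c^{N+c}=\int_0^{N+c}-\int_0^c$ now delivers the claim. When $|\b_2|N^2\ge 1$, the regime hypothesis $|\b_1|\le 4|\b_2|N$ forces $1+|\b_1|N+|\b_2|N^2\asymp|\b_2|N^2$, so $|I(\bs\b)|\ll|\b_2|^{-1/2}\asymp N/\sqrt{1+|\b_1|N+|\b_2|N^2}$, as required.

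This is really three standard ingredients—trivial bound, one integration by parts, and \Cref{lem:letA,Xposnums} via completion of the square—glued together by a case split. The main work is not analytic but organizational: verifying that each of the two non-trivial bounds is sharp in the intended regime and that, together with the trivial bound, they cover all of parameter space.
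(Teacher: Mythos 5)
Your proof is correct, but it takes a genuinely different route from the paper: the paper disposes of this lemma in one line by citing Vaughan's Theorem 7.3 (the standard bound for the singular integral in the Hardy--Littlewood method), whereas you give a self-contained argument. Your dichotomy is the natural one --- when the linear term dominates ($|\b_1|\ge 4|\b_2|N$) the phase has no stationary point in $[0,N]$ and a single integration by parts gives $\ll 1/|\b_1|$; when the quadratic term dominates you complete the square and reduce to a pure Gaussian integral, which is exactly what Lemma \ref{lem:letA,Xposnums} (plus the trivial bound for short ranges) controls uniformly in the endpoints --- and in each regime the quantity $1+|\b_1|N+|\b_2|N^2$ is comparable to the dominant term, so the two bounds patch together with the trivial estimate to cover all of parameter space. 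This buys self-containedness and, pleasingly, reuses Lemma \ref{lem:letA,Xposnums}, which the paper proves from scratch but only ever invokes in Section \ref{sec:prooflowbound12}; the cost is that the argument is specific to degree $2$, while Vaughan's theorem covers general degree with exponent $-1/k$. Two small points to tidy: Lemma \ref{lem:letA,Xposnums} is stated for $A>0$, so for $\b_2<0$ you should say you pass to the complex conjugate (which is why only $|\b_2|^{-1/2}$ appears in your claim); and in the linear regime the hypothesis ``the sum $|\b_1|N+|\b_2|N^2$ is large'' combined with $|\b_2|N^2\le|\b_1|N/4$ is what guarantees $|\b_1|N\ge 1$ --- you use this implicitly, and it is worth one explicit sentence.
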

 
 \begin{proof}
This is a special case of Vaughan \cite{vau}, Theorem 7.3. 
 \end{proof}

 \begin{lem}\label{lem:letYge2}
(i) Let $Y \ge 2$. We have
 \[\sum_{\substack{y \le Y\\
 y \text{ odd}}} \phi(y) = \frac 2{\pi^2}\, Y^2 +
 O(Y\log Y).\]

(ii) For large $Y$, we have 
 \[\sum_{\substack{
\frac Y2 < y \le Y\\
y \text{ odd}}} \, \frac{\phi(y)}{y^\a} 
\gg_\a Y^{2-\a} \quad (\a \text{ real}).\]

(iii) For large $Y$, we have
 \[\sum_{\substack{
 y\le Y\\
 y \text{ odd}}} \, \frac{\phi(y)}{y^2} \gg \log Y.\]
 \end{lem}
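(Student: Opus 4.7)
The plan is to prove (i) by Möbius inversion and then derive (ii) and (iii) from (i) in sequence.

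For (i), I would start from $\phi(y) = \sum_{d \mid y} \mu(d)\, (y/d)$. Since every divisor of an odd $y$ is odd, writing $y = dm$ gives
\[
\sum_{\substack{y \le Y \\ y \text{ odd}}} \phi(y) = \sum_{\substack{d \le Y \\ d \text{ odd}}} \mu(d) \sum_{\substack{m \le Y/d \\ m \text{ odd}}} m.
\]
Using $\sum_{m \le M,\, m \text{ odd}} m = M^2/4 + O(M)$, one extracts a main term $(Y^2/4)\sum_{d \text{ odd}} \mu(d)/d^2$, with an error $O\bigl(Y \sum_{d \le Y} 1/d\bigr) = O(Y\log Y)$ from the $O(M)$ terms and $O(Y)$ from extending the sum over $d$ to all odd integers (the tail being bounded by $\sum_{d > Y} 1/d^2 \ll 1/Y$). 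The resulting arithmetic constant is
\[
\sum_{d \text{ odd}} \frac{\mu(d)}{d^2} = \prod_{p \ne 2}\bigl(1 - p^{-2}\bigr) = \bigl(1 - \tfrac14\bigr)^{-1}\zeta(2)^{-1} = \frac{8}{\pi^2},
\]
yielding the main term $2Y^2/\pi^2$ claimed in (i).

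For (ii), I would subtract the asymptotic of (i) applied at $Y$ and at $Y/2$ to obtain
\[
\sum_{\substack{Y/2 < y \le Y \\ y \text{ odd}}} \phi(y) = \frac{3Y^2}{2\pi^2} + O(Y\log Y) \gg Y^2
\]
for large $Y$. Since $y \asymp Y$ throughout this range, we have $y^{-\alpha} \asymp_\alpha Y^{-\alpha}$, and so the weighted sum is $\gg_\alpha Y^{-\alpha} \cdot Y^2 = Y^{2-\alpha}$.

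For (iii), I would apply (ii) with $\alpha = 2$ dyadically, writing
\[
\sum_{\substack{y \le Y \\ y \text{ odd}}} \frac{\phi(y)}{y^2} \ \ge \ \sum_{k=0}^{K} \ \sum_{\substack{Y/2^{k+1} < y \le Y/2^k \\ y \text{ odd}}} \frac{\phi(y)}{y^2},
\]
with $K \asymp \log Y$ chosen so that $Y/2^K$ is still large enough for (ii) to give a useful bound on each dyadic block. Part (ii) contributes $\gg 1$ per block, and summing over $\gg \log Y$ blocks yields the stated bound. The main obstacle is the careful bookkeeping in (i): one must isolate the Euler factor at $p = 2$ correctly so that the naive constant $1/\zeta(2) = 6/\pi^2$ becomes $8/\pi^2$, and must keep the error at $O(Y\log Y)$ rather than something worse; once (i) is in hand, parts (ii) and (iii) are essentially automatic.
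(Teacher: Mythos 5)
Your proposal is correct and follows essentially the same route as the paper: Möbius inversion with the odd-divisor observation and the Euler factor at $2$ giving the constant $8/\pi^2$ for (i), differencing at $Y$ and $Y/2$ for (ii), and a dyadic decomposition summing $\gg \log Y$ blocks each contributing $\gg 1$ for (iii). No gaps.
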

 
 \begin{proof}
(i) We have
 \begin{align*}
\sum_{\substack{y \le Y\\
y \text{ odd}}} \phi(y) &= \sum_{\substack{
y \le Y\\
y \text{ odd}}} y \sum_{d\mid y} \ \frac{\mu(d)}d\\[4mm]
&= \sum_{\substack{
d \le Y\\
d \text{ odd}}} \mu(d) \sum_{\substack{
n \le Y/d\\
n \text{ odd}}} n\\[4mm]
&= \sum_{\substack{
d\le Y\\
d \text{ odd}}} \mu(d) \left(\frac{Y^2}{4d^2} + O
\left(\frac Yd\right)\right).
 \end{align*}
Since
 \[\sum_{\substack{
 d = 1\\
 d \text{ odd}}}^\infty \ \frac{\mu(d)}{d^2} =
 \frac 1{1 - \frac 1{2^2}} \ \prod_p \left(1 -
 \frac 1{p^2}\right)= \frac 43 \cdot \frac 6{\pi^2}
 = \frac 8{\pi^2}\, ,\]
the result follows easily.

Now (ii) is an obvious consequence of (i). Next,
 \[\sum_{\substack{
 2^{j-1} < y \le 2^j\\
 y \text{ odd}}} \ \frac{\phi(y)}{y^2} \gg 1\]
from (ii). Summing over $\gg \log Y$ values of $j$ yields (iii).
 \end{proof}
 \bigskip
  
 \section{Proof of the upper bound in \eqref{eq1.2}.}\label{sec:proofupbound1.2}

Let
 \[F = \{x \in[0,1] \cdot |S(x,t(x))| > N^{3/4}\}.\]
Then
 \[\int_0^1 |S(x,t(x))|^p dx \le N^{3p/4} \ll
 N^{a(p)} \quad (p \ge 1).\]
Thus we may confine integration of $|S(x,t(x))|^p$ to the set $F$.

Let $x \in F$. By Lemma \ref{lem:suppose|S(x,t)|} there exist integers $q$, $a_1$, $a_2$ with $(q, a_1, a_2) = 1$,
 \begin{align}
1 \le q \ll N^{1/2 + \e} \ , \ &\b_1 := x - \frac{a_1}q \ll q^{-1} N^{-1/2+\e},\label{eq3.1}\\
&\hskip .75in \b_2 := t(x) - \frac{a_2}q \ll q^{-1}N^{-3/2 +\e}.\notag
 \end{align}

By Lemma \ref{lem:letx=fraca1q+b1},
 \[S(x,t(x)) = q^{-1} S(q, \bs a) I(\bs\b) + \Delta\]
where
 \[\Delta \ll N^{1/2+\e}.\]
It follows at once that
 \begin{align*}
N^{3/4} &\le |S(x,t(x))| \ll q^{-1} |S(q,\bs a) I(\bs\b)|\\[2mm]
&\ll q^{-1/2} \min(N, N^{1/2} |\b_1|^{-1/2}). 
 \end{align*}
Sharpening \eqref{eq3.1} a little, we find that
 \[q \ll N^{1/2} \ , \ \b_1 \ll q^{-1} N^{1/2}.\]
Hence, for a suitable constant $C > 0$,
 \begin{equation}\label{eq3.2}
\int_F |S(x,t(x))|^p dx \ll \sum_{q \le CN^{1/2}} \ \sum_{a_1 =0}^q q^{-p/2} \int_{-Cq^{-1}N^{-1/2}}^{Cq^{-1}N^{-1/2}} \min(N^p, N^{p/2}|\b_1|^{-p/2})d\b_1.
 \end{equation}
At this stage we exclude $p=2$ from the argument. The contribution to the integral on the right-hand side of \eqref{eq3.2} from $|\b_1| \le N^{-1}$ is $\ll N^{\b - 1}$. From the two remaining intervals we obtain a contribution
 \[\ll \begin{cases}
 N^{p/2}(N^{-1})^{1-\frac p2} = N^{p-1} & \text{if } p > 2\\[2mm]
 N^{p/2}(q^{-1}N^{-1/2})^{1-p/2} = N^{\frac{3p}4-\frac 12} q^{\frac p2 - 1} & \text{if } p < 2. \end{cases}\]
We conclude that
 \begin{equation}\label{eq3.3}
\int_F |S(x,t(x))|^p dx \ll \sum_{q \le CN^{1/2}} q^{1-p/2} (N^{p-1} + N^{3p/4 - 1/2} q^{\frac p2 - 1}).
 \end{equation}

For $p < 4$, $p \ne 2$,
 \begin{equation}\label{eq3.4}
\sum_{q \le CN^{1/2}} q^{1-p/2} \ll (N^{\frac 12})^{2-\frac p2} = N^{1-\frac p4}. 
 \end{equation}
For $p=4$,
 \begin{equation}\label{eq3.5}
\sum_{q \le CN^{1/2}} q^{1-p/2} \ll \log N.
 \end{equation}
For $p > 4$,
 \begin{equation}\label{eq3.6}
\sum_{q \le CN^{1/2}} q^{1-p/2} \ll 1.
 \end{equation}

The desired upper bound follows on combining \eqref{eq3.3}--\eqref{eq3.6}, except for $p=2$. Since $\|f\|_{L^p[0,1]}$ is increasing with $p$ for a given function $f$ on $[0,1]$, we obtained the desired result for $p=2$ by comparison with the result for $p=3$.

 \section{Proof of the lower bound in \eqref{eq1.2} for $1 \le p \le 4$.}\label{sec:prooflowbound12}
 
Let
 \[J(q,a_1) = \left[\frac{a_1}q + \frac{15}N \ , \
 \frac{a_1}q + \frac{16}N\right]\]
be one of the intervals in the definition of $E$. The intervals $J(q_1, a_1)$, $J(q_2, a_2)$ of $E$ with $\frac{a_1}{q_1} \ne \frac{a_2}{q_2}$ are non-overlapping, since
 \[\left|\frac{a_1}{q_1} - \frac{a_2}{q_2}\right| \ge
 \frac 1{q_1q_2} \ge \frac{36}N.\]

For $x \in J(q,a_1)$, we write $a_2 = 1$, $\b_1 = x - \frac{a_1}q$, $\b_2 = -\frac{\b_1}N$,
 \begin{equation}\label{eq4.1}
v(x) = \frac{a_2}q - \frac{\b_1}N = \frac 1q - \frac{\b_1}N.
 \end{equation}

We have

 \begin{equation}\label{eq4.2}
M_p(N) \ge \sum_{\substack{
3 \le q \le N^{1/2}/6\\
q \text{ odd}}} \ \sum_{\substack{
a_1 = 1\\
(a_1,q) = 1}}^q \int_{J(q,a_1)} |S(x,v(x))|^p dx
 \end{equation}
We shall show that for $x$ in any of the intervals $J(q, a_1)$ in \eqref{eq4.2}, we have
 \begin{equation}\label{eq4.3}
|S(x,v(x))| \gg Nq^{-1/2}.
 \end{equation}
The lower bound in \eqref{eq1.2} follows from \eqref{eq4.2}, \eqref{eq4.3} on an application of Lemma \ref{lem:letYge2} (ii), (iii):
 \begin{align*}
M_p(N) &\gg \sum_{\substack{
3 \le q \le N^{1/2}/6\\
q \text{ odd}}} \ \sum_{\substack{
a_1 = 1\\
(a_1,q) = 1}}^q \int_{J(q,a)} q^{-p/2} N^pd\b_1\\[4mm]
&\gg \sum_{\substack{
3 \le q \le N^{1/2}/6\\
q \text{ odd}}} \phi(q)q^{-p/2} N^{p-1}\\[4mm]
&\gg N^{p-1}(N^{1/2})^{2-p/2} (\log N)^{b(p)} = N^{3p/4} (\log N)^{b(p)}.
 \end{align*}
Thus it remains to prove \eqref{eq3.2}. With $x \in J(q, a_1)$ and $v(x)$ as in \eqref{eq4.1}, we have
 \begin{equation}\label{eq4.4}
|S(x,v(x))| = q^{-1} S(q,a,1) I(\b_1, \b_2) + \Delta
 \end{equation}
where
 \begin{equation}\label{eq4.5}
\Delta \ll q(1 + |\b_1| N + |\b_2| N^2) \ll N^{1/2}.
 \end{equation}
Now

 \begin{align*}
I(\b_1,\b_2) &= \int_0^N e\left(\b_1\g - \frac{\b_1}N \, \g^2\right) d\g\\[4mm]
&= \int_0^N e\left(-\frac{\b_1}N \, \left(\g - \frac N2\right)^2\right) e\left(\frac{\b_1N}4\right) d\g,\\[4mm]
|I(\b_1, \b_2)| &= \left|\int_{-N/2}^{N/2} e\left(\frac{\b_1\g^2}N\right) d\g\right|.
 \end{align*}
Now we apply Lemma \ref{lem:letA,Xposnums} with $X = N/2$, $A = \b_1/N$ to obtain
 \[\int_{-N/2}^{N/2} e\left(\frac{\b_1\g^2}N\right)d\g
 = \frac{N^{1/2}}{2\b_1^{1/2}} + \frac{2\l}{\b_1},
 |\l| \le \frac 7{12}.\]
Since $\b_1 \ge \frac{15}N$, it is easily seen that
 \[\frac{N^{1/2}}{2\b_1^{1/2}} > \frac{3\l}{\b_1}\]
and so
 \[|I(\b_1, \b_2)| \gg N.\]
Recalling Lemma \ref{lem:(i)(ii)} (ii), we have
 \[|q^{-1} S(q,a_1) I(\b_1,\b_2)| \gg
 \frac N{q^{1/2}} \gg N^{3/4}.\]
Combining this with \eqref{eq4.4}, \eqref{eq4.5} we conclude that \eqref{eq4.3} holds in $J(q,a_1)$. This completes the proof of the lower bound in \eqref{eq1.2}.
 \bigskip

 \end{document}